%
%


\documentclass{artjlt}

\usepackage{amsmath,amsfonts,amssymb}





\addtolength{\footskip}{.3in} \addtolength{\oddsidemargin}{-.5in}
\addtolength{\evensidemargin}{-.5in} \addtolength{\textwidth}{1in}

\allowdisplaybreaks[1]

\newtheorem{thm}{Theorem}[section]

\newtheorem{cor}[thm]{Corollary}
\newtheorem{lem}[thm]{Lemma}
\newtheorem{prop}[thm]{Proposition}
\newtheorem{conj}[thm]{Conjecture}

\theoremstyle{definition}

\numberwithin{equation}{section}

\def\O{\Omega}

\newcommand{\R}{\mathbb R}

\def \l{\lambda}
\def \a{\alpha}

\renewcommand{\b}{\beta}

\def\g{\mathfrak g}

\newcommand{\h}{\mathfrak h}


\renewcommand{\H}{\mathcal H}

\title{Admissibility For Monomial Representations of Exponential Lie Groups}                                     
\author{Bradley Currey and Vignon Oussa}                 
\lastname{Currey and Oussa}  

\msc{22E25,22E27}    

\keywords{exponential Lie groups, coadjoint orbits, monomial representations}         

\address{%
Bradley Currey\\               
Department of Mathematics and Computer Science\\
220 N. Grand Blvd.\\
St.Louis, MO 63103\\            
curreybn@slu.edu               
}
\address{%
Vignon Oussa\\               
Department of Mathematics and Computer Science\\
220 N. Grand Blvd.
\\St.Louis, MO 63103\\            
voussa@slu.edu               
}
%
%


\begin{document}

\maketitle

\begin{abstract}
 Let $G$ be a simply connected
exponential solvable Lie group, $H$ a closed connected subgroup, and let $\tau$ be a representation of $G$ induced from a unitary character $\chi_f$ of $H$. The spectrum of $\tau$ corresponds via the orbit method to the set $G\cdot A_\tau / G$ of coadjoint orbits that meet the spectral variety $A_\tau = f + \h^\perp$. We prove that the spectral measure of $\tau $ is absolutely continuous with respect to the Plancherel measure if and only if $H$ acts freely on some point of $A_\tau$. As a corollary we show that if $G$ is nonunimodular, then $\tau$ has admissible vectors if and only if the preceding orbital condition holds.  
\end{abstract}

\section{Introduction}

At the intersection of abstract harmonic analysis and wavelet theory lies the fundamental notion of admissibility. Given a unitary representation $\tau$ of a locally compact topological group $G$, a vector $\psi \in \H_\tau$ is admissible if the mapping $\phi \mapsto \langle \phi,\tau(\cdot)\psi\rangle$ is an isometry of $\H_\tau$ into $L^2(G)$. Which representations have admissible vectors? This classical question is answered in a variety of contexts and for various classes of representations, for example when $G$ is type I and $\tau$ is irreducible \cite{DM}, or when $\tau$ is the left regular representation of $G$ \cite{F02}. The monograph  \cite{Fu05}, in addition to containing numerous other references for admissibility, describes the relation between this question and Plancherel theory.

In this paper we consider the following class of representations. Let $G$ be an exponential solvable Lie group, and let $\tau$ be the unitary representation of $G$ induced from a unitary character of $H$. A description of the irreducible decomposition of $\tau$ is given in terms of the coadjoint orbit picture in \cite{FUJ}. On the other hand an explicit Plancherel formula for $G$ is given in \cite{C2} using coadjoint orbit parameters. Using these results, we give a simple necessary and sufficient condition that $\tau$ be a subrepresentation of the left regular representation of $G$ in terms of the orbit picture. Specifically, let $\tau$ be induced from the character $\chi$ of $H$, let $f$ be the linear functional on $\h$ corresponding to $\chi$, so that $\chi(\exp Y) = e^{if(Y)}$. Let $A_\tau$ be the real affine variety of all $\ell \in \g^*$  whose restriction to $\h$ is $f$. Then $A_\tau$ is an $\text{Ad}^*H$-space, and we show (Theorem \ref{measurethm}) that $\tau$ is contained in the left regular representation if and only if $H$ acts freely on some $\ell \in A_\tau$ (and hence on a Zariski open subset of $ A_\tau$.) Combining this result with the methods of \cite{Fu05}, it follows that if $G$ is nonunimodular, then the preceding condition is both necessary and sufficient in order that $\tau$ have admissible vectors (Corollary \ref{admissible}). If $G$ is unimodular, then the situation for admissibility is still murky.

\section{Preliminaries}\label{prel}

Let $G$ be a connected, simply connected exponential solvable Lie group
with Lie algebra $\g$. Given $s\in G$, $Z\in\mathfrak g$,
and $\ell\in\g^*$, we denote both the adjoint and coadjoint actions
multiplicatively: Ad$(s)Z=s\cdot Z$ and $\text{Ad}^*(s)\ell= s\cdot
\ell$. Given $\ell \in \g^*$, let $G(\ell)$ be the stabilizer of $\ell$ in $G$; then $G$ is connected and its Lie algebra is $\g(\ell) = \{ X \in \g : \ell [X,Z] = 0 \text{ holds for all } Z \in \g\}$.

For the remainder of this paper, we fix a closed connected subgroup $H$ of $G$ with Lie algebra $\h$, a unitary character $\chi$ of $H$, and a {\it monomial} representation $\tau = \text{ind}_H^G(\chi)$. Let $f \in \h^*$ satisfy $\chi(\exp Y) = e^{if(Y)}$ so that  $[\h,\h] \subset \ker f$; we also use the notation $\tau = \tau(f,\h)$. In our notation, we will identify representations that are unitarily equivalent. Let $\hat G$ denote the Borel space of equivalence classes of irreducible representations of $G$. 

Let $\tau$ be a monomial representation. Since $G$ is type I, we have a unique measure class on $\hat G$ such that 
$$
\tau = \int_{\hat G}^\oplus m_\tau(\pi) \pi d\nu(\pi).
$$
In particular, the Plancherel measure class $\mu$ is the measure class determined by the regular representation $L$. Since the multiplicity $m_L(\pi) = \infty$ $\mu$-a.e., then $\tau$ is a subrepresentation of $L$ if and only if $\nu$ is absolutely continuous with respect to $ \mu$.

The measure class $\nu$ is described on $\g^*/G$ as follows: for $\tau = \tau(f,\h)$, we put $A_\tau = f + \h^\perp = \{ \ell \in \g^* : \ell|_{\h} = f\}$. Let $\xi$ be the canonical Lebesgue measure class on $A_\tau$ extended to $\g^*$: for any Borel subset $B$ of $\g^*$, $\xi(B) = \xi(B\cap A_\tau)$. By \cite{FUJ}, $\nu$ is the pushforward of $\xi$ to $\g^*/G$.
Though $\xi$ is singular with respect to the Lebesgue measure class on $\g^*$ (unless $H$ is the trivial subgroup), its pushforward $\nu$ may be absolutely continuous with respect to the Plancherel measure. In the next section, we will determine when this is the case.

\section{Absolute continuity of the spectral measure.}\label{measure}

Consider the action of $H$ on $\g^*$ by the restriction of the coadjoint action. Given $\ell \in \g^*$, the Lie algebra of the stabilizer of $\ell$ in $H$ is $\h(\ell) = \h \cap \g(\ell)$. If $s \in G$, then one observes that $\h(s\cdot\ell) = s \cdot \h(\ell)$. If $U(d) = \{ \ell \in \g^* : \dim H\cdot \ell = d\}$, then the preceding observation shows that each $U(d)$ is $G$-invariant. Put 
$$d_\tau = \max \{ d : U(d) \cap A_\tau \ne \emptyset\}
$$ 
and $V  := U(d_\tau)\cap A_\tau$. We claim that $V$ is a Zariski open subset of $A_\tau$. 
Indeed, fix a basis $\left\{ Y_{1},Y_{2},\cdots,Y_{m}\right\}$ for $\h$ and a basis $\left\{ Z_{1},Z_{2},\cdots,Z_{n}\right\}$ for $\g$.  For $\ell \in \g^*$ let $M(\ell)$ be the $m \times n$ matrix
$$M(\ell) = \left[\begin{matrix} \ell [Y_{1}, Z_1] &\ell [Y_{1}, Z_2] &\cdots  &\ell [Y_{1}, Z_n]  \\
\ell [Y_{2}, Z_1]  & \ell [Y_{2}, Z_2] &\cdots &\ell [Y_{2}, Z_n]  \\
\vdots &\vdots &\cdots  &\vdots \\
\ell [Y_{m}, Z_1]  & \ell [Y_{m}, Z_2] &\cdots & \ell [Y_{m}, Z_n]
 \end{matrix}\right].$$

\begin{lem}\label{hdim} Let $\ell \in \g^*$. Then $\dim H\cdot \ell = \text{\rm rank}\ M(\ell)$. Hence $V$ is Zariski-open in $A_\tau$. 

\end{lem}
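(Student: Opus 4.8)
The plan is to identify $\mathrm{rank}\,M(\ell)$ with the dimension of the $H$-orbit through $\ell$ by reading $M(\ell)$ as the matrix of the infinitesimal coadjoint action, and then to deduce Zariski-openness of $V$ from the fact that $d_\tau$ is a \emph{maximal} rank over $A_\tau$.

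First, since $H$ is connected, the orbit $H\cdot\ell$ is an immersed submanifold of $\g^*$ with $\dim H\cdot\ell = \dim\h - \dim\h(\ell)$, where $\h(\ell) = \h\cap\g(\ell) = \{X\in\h : \ell[X,Z]=0 \text{ for all } Z\in\g\}$ is the Lie algebra of the stabilizer, as recalled in Section~\ref{prel}. So it suffices to show $\dim\h(\ell) = m - \mathrm{rank}\,M(\ell)$. To this end I would introduce the linear map $\Phi_\ell\colon \h\to\g^*$ given by $\Phi_\ell(X)(Z) = \ell[X,Z]$ — this is, up to sign, the derivative at $\ell$ of the orbit map $s\mapsto s\cdot\ell$. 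Its kernel is exactly $\h(\ell)$ by definition, and with respect to the basis $\{Y_1,\dots,Y_m\}$ of $\h$ and the basis of $\g^*$ dual to $\{Z_1,\dots,Z_n\}$, the matrix of $\Phi_\ell$ is precisely $M(\ell)$, since its $(i,j)$ entry is the $Z_j$-coordinate of $\Phi_\ell(Y_i)$, namely $\ell[Y_i,Z_j]$. Hence $\mathrm{rank}\,M(\ell) = \mathrm{rank}\,\Phi_\ell = m - \dim\h(\ell)$, which combined with the orbit-stabilizer identity gives $\dim H\cdot\ell = \mathrm{rank}\,M(\ell)$.

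For the second assertion, note that each entry $\ell[Y_i,Z_j]$ of $M(\ell)$ is a linear functional of $\ell$, so every $k\times k$ minor of $M(\ell)$ is a polynomial function on $\g^*$, and in particular restricts to a polynomial on the affine subspace $A_\tau$. By the first part, $U(d)\cap A_\tau = \{\ell\in A_\tau : \mathrm{rank}\,M(\ell)=d\}$, so by the definition of $d_\tau$ we have $\mathrm{rank}\,M(\ell)\le d_\tau$ for \emph{every} $\ell\in A_\tau$. Consequently, on $A_\tau$ the condition $\mathrm{rank}\,M(\ell)=d_\tau$ is equivalent to $\mathrm{rank}\,M(\ell)\ge d_\tau$, i.e. to the non-vanishing of at least one of the finitely many $d_\tau\times d_\tau$ minors of $M(\ell)$. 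Therefore $V = U(d_\tau)\cap A_\tau$ is the complement in $A_\tau$ of the common zero locus of these minors, hence Zariski-open, and it is nonempty by the choice of $d_\tau$.

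The only genuinely non-bookkeeping point is this last one: a prescribed-rank locus of a matrix of polynomials is in general merely locally closed, and it is precisely the maximality of $d_\tau$ over $A_\tau$ that renders the inequality $\mathrm{rank}\,M(\ell)\le d_\tau$ vacuous and thereby upgrades $V$ to an honest Zariski-open set. Everything else — the tangent-space description of the orbit and the matrix representation of $\Phi_\ell$ — is routine, so I expect no real obstacle beyond being careful with this observation (and with the harmless sign discrepancy between $\Phi_\ell$ and the true infinitesimal coadjoint action, which does not affect rank or kernel).
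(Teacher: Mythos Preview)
Your proof is correct and follows essentially the same route as the paper: both identify $\h(\ell)$ as the kernel of the map $Y\mapsto (Z\mapsto \ell[Y,Z])$, whose matrix is $M(\ell)$, and then invoke orbit--stabilizer. The paper is terser, simply asserting that ``it is easily seen that $\dim\h/\h(\ell)=\text{rank}\,M(\ell)$'' and leaving the Zariski-openness of $V$ entirely implicit; your explicit treatment of the map $\Phi_\ell$ and, especially, your remark that it is the \emph{maximality} of $d_\tau$ on $A_\tau$ that upgrades the rank-$d_\tau$ locus from locally closed to Zariski-open, spell out exactly what the paper takes for granted.
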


\begin{proof} We have $\dim H\cdot \ell = \h / \h(\ell)$, where $\h(\ell)$ is the Lie algebra of the stabilizer of $\ell$ in $H$. Now $\h(\ell) = \g^\ell \cap \h = \{ Y \in \h : \ell [Y,X] = 0 \text{ for all } X \in \g\}$. It is easily seen that $\dim \h / \h(\ell) =  \text{rank}M(\ell)$.\qedhere \end{proof}

We are especially interested in the case where $d_\tau = \dim H$.

\begin{cor} Suppose that there is some $\ell \in A_\tau$ such that $H$ acts freely on $\ell$. Then $H$ acts freely on a Zariski open subset of $A_\tau$.

\end{cor}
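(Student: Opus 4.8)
The plan is to read this off directly from Lemma \ref{hdim} together with a standard fact about exponential solvable groups. First I would recall that for $G$ exponential solvable the coadjoint stabilizer of any functional is connected, so the stabilizer $H(\ell) = \{ s \in H : s\cdot \ell = \ell\}$ equals $\exp \h(\ell)$ where $\h(\ell) = \h \cap \g(\ell)$. Hence ``$H$ acts freely on $\ell$'' (i.e. $H(\ell) = \{e\}$) is equivalent to $\h(\ell) = 0$, and by Lemma \ref{hdim} this is in turn equivalent to $\dim H\cdot \ell = m$, that is, to $\text{\rm rank}\,M(\ell) = m = \dim H$.

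Now suppose $\ell_0 \in A_\tau$ is a point on which $H$ acts freely, so $\text{\rm rank}\,M(\ell_0) = m$. Since $M(\ell)$ is an $m\times n$ matrix, $m$ is the largest value the rank can attain at any point of $\g^*$; in particular $d_\tau = \max\{ d : U(d)\cap A_\tau \ne \emptyset\} = m$. Therefore $V = U(d_\tau)\cap A_\tau = \{ \ell \in A_\tau : \text{\rm rank}\,M(\ell) = m\}$, which is nonempty because it contains $\ell_0$, and which is Zariski-open in $A_\tau$ by Lemma \ref{hdim} (it is the nonvanishing locus, inside the affine variety $A_\tau$, of the sum of squares of the $m\times m$ minors of $M(\ell)$, each of which is a polynomial in $\ell$).

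Finally I would observe that every $\ell \in V$ satisfies $\text{\rm rank}\,M(\ell) = m$, hence $\h(\ell) = 0$ and, by the connectedness remark above, $H(\ell) = \{e\}$; thus $H$ acts freely on each point of the nonempty Zariski-open set $V \subseteq A_\tau$, which is the assertion. I do not expect any genuine obstacle here, since the statement is essentially a corollary of Lemma \ref{hdim}; the only point requiring a word of justification is the passage from $\h(\ell) = 0$ to $H(\ell) = \{e\}$, i.e. the connectedness of coadjoint stabilizers in exponential solvable Lie groups, which is classical.
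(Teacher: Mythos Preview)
Your argument is correct and is precisely the intended derivation: the paper states the corollary without proof, immediately after Lemma~\ref{hdim}, as a direct consequence of that lemma together with the connectedness of stabilizers (which the paper records in the preliminaries for $G(\ell)$; the passage to $H(\ell)=H\cap G(\ell)=\exp(\h\cap\g(\ell))$ that you spell out is the right justification). There is nothing to add.
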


Define the smooth map $\phi :G\times A_\tau\rightarrow \mathfrak{g}^{\ast }$ by
$$\phi \left( s,\ell\right)
=s\cdot \ell.
$$
Choose a basis $\{Z_1, Z_2, \dots , Z_n\}$ for $\g$ with the following properties.

\begin{itemize}

\item For each $j, 1 \le j< n$, set $\g_j = \text{span} \{Z_1, Z_2, \dots , Z_j\}$. If $\g_j$ is   not an ideal in $\g$, then $\g_{j+1}$ and $\g_{j-1}$ are ideals. 

\item If $\g_{j}$ is not an ideal in $\g$, then the module $\g_{j+1} / \g_{j-1}$ is not $\R$-split. 

\end{itemize}

Then $(t_1, t_2, \dots , t_n) \mapsto \exp t_{1}Z_{1}\cdots \exp t_{n}Z_{n}$ is a global diffeomorphism of $\R^n$ onto $G$. 
Recall the basis $Y_1, Y_2, \dots , Y_m$ of $\h$ and put $f_j = f(Y_j), 1 \le j \le m$. Choose $X_1, X_2, \dots, X_{n-m}$ so that $Y_1, Y_2, \dots , Y_m, X_1, X_2, \dots , X_{n-m}$ is an ordered basis of $\g$. Let $\b$ be the natural global chart for $\g^*$ determined by the ordered dual basis $Y_1^*, Y_2^*, \dots Y_m^*, X_1^*, X_2^*, \dots , X_{n-m}^*$. 
Now define $\a : \R^n \times \R^{n-m} \rightarrow G\times A_\tau$ by
$$
\a(t,x)  = \Bigl(\exp t_{1}Z_{1}\cdots \exp t_{n}Z_{n},\  f_{1}Y_{1}^{\ast }+\cdots +f_{m}Y_{m}^{\ast }+x_{1}X_{1}^{\ast }+\cdots
+x_{n-m}X_{n-m}^{\ast }\Bigr)
$$
Note that for $t = 0$, $\a(0,\cdot) : \R^{n-m} \rightarrow \{e\} \times A_\tau \simeq A_\tau$ defines a global diffeomorphism from $\R^{n-m}$ onto $A_\tau$ (here $e$ is the identity in $H$). Moreover $\a^{-1}$ is a global chart for $G \times A_\tau$ and  $ \b \circ \phi \circ \a$ is a coordinatization of the map $\phi$. For simplicity of notation we set $\tilde\phi = \phi \circ \a$; observe that the coordinate functions for the map $ \b \circ \phi \circ \a$ are given by
$$
( \b \circ \phi \circ \a)_j(t,x) = \tilde\phi(t,x)(Y_j) = \phi\bigl(\a(t,x)\bigr)(Y_j), \ \ 1 \le j \le m,
$$
and
$$
 (\b \circ \phi \circ \a)_j(t,x) = \tilde\phi(t,x)(X_{j-m}) = \phi\bigl(\a(t,x)\bigr)(X_{j-m}), \ \ m+1 \le j \le n.
$$

\vskip 0.5cm

\begin{lem}\label{rank}
For each $\ell \in A_\tau$,
$$
\text{\rm rank}\ d\phi(e,\ell) = \dim H\cdot \ell + n -m = n - \dim H(\ell).
$$
\end{lem}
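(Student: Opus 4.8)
The plan is to compute the rank of $d\phi(e,\ell)$ by analyzing the coordinatized map $\widetilde\phi = \phi\circ\a$ and its differential at the point $(0,x)$ corresponding to $\ell$. Since $\a$ and $\b$ are global diffeomorphisms (charts), $\text{rank}\,d\phi(e,\ell) = \text{rank}\,d(\b\circ\phi\circ\a)(0,x)$, so it suffices to examine the Jacobian matrix of the coordinate functions listed just above the statement. This Jacobian naturally splits into blocks according to the split of the domain variables into $t\in\R^n$ (the $G$-direction) and $x\in\R^{n-m}$ (the $A_\tau$-direction), and of the codomain into the first $m$ coordinates (pairing against $Y_1,\dots,Y_m$) and the last $n-m$ (pairing against $X_1,\dots,X_{n-m}$).

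The key observations are the following. First, along the $x$-direction at $t=0$: the map $\a(0,\cdot)$ is a diffeomorphism onto $A_\tau$, and since $\phi(e,\cdot)$ is the identity on $A_\tau$, the $x$-block of the Jacobian is simply the derivative of the inclusion $A_\tau\hookrightarrow\g^*$ in these coordinates. In the $\b$-chart, the first $m$ coordinates of a point of $A_\tau$ are frozen at $f_1,\dots,f_m$, while the last $n-m$ coordinates are exactly $x_1,\dots,x_{n-m}$; hence the $x$-derivatives of the first $m$ coordinate functions vanish identically, and the $x$-derivatives of the last $n-m$ coordinate functions form the identity matrix $I_{n-m}$. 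Second, along the $t$-direction at $t=0$: differentiating $\exp t_1Z_1\cdots\exp t_nZ_n\cdot\ell$ at $t=0$ in $t_k$ produces $-\text{ad}^*(Z_k)\ell$, i.e.\ the functional $X\mapsto \ell[Z_k,X]$ (up to sign). Therefore the top-left $m\times n$ block of the Jacobian — the $t$-derivatives of the first $m$ coordinate functions — is, up to sign and reordering of columns, exactly the matrix $M(\ell)$ from Lemma \ref{hdim} evaluated with the $Z_k$'s as the second set of vectors. Consequently that block has rank $\dim H\cdot\ell$ by Lemma \ref{hdim}.

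Putting the blocks together: the full Jacobian has the shape $\begin{pmatrix} B & 0\\ * & I_{n-m}\end{pmatrix}$ where $B$ is the $m\times n$ matrix of rank $\dim H\cdot\ell$ occupying the first $m$ rows. The bottom $n-m$ rows are linearly independent and independent of the top rows because of the identity block in the $x$-columns (which are zero in the top rows). Hence the total rank is $\text{rank}\,B + (n-m) = \dim H\cdot\ell + n - m$. Finally, $\dim H\cdot\ell = \dim\h/\h(\ell) = m - \dim H(\ell)$, so $\dim H\cdot\ell + n - m = n - \dim H(\ell)$, giving both asserted equalities.

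The main obstacle is bookkeeping rather than any deep idea: one must be careful that the "frozen" coordinate directions of $A_\tau$ are precisely the $Y_j^*$-directions while the free ones are the $X_j^*$-directions, so that the block structure is genuinely triangular with an identity corner; and one must correctly identify the $t$-derivative block at $t=0$ with (a column permutation of, and sign change of) $M(\ell)$, using that the basis $\{Z_k\}$ for $\g$ is a valid choice of second basis in the definition of $M(\ell)$ even though it differs from $\{Y_j,X_j\}$ — this is harmless since rank is basis-independent. No genuine analytic difficulty arises because everything is evaluated at $t=0$, where the higher-order terms in the Baker--Campbell--Hausdorff-type expansions do not contribute to the first derivative.
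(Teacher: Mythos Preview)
Your proposal is correct and follows essentially the same route as the paper: compute the Jacobian of $\b\circ\phi\circ\a$ at $(0,x)$, observe that it has the block form $\begin{pmatrix} M(\ell) & 0 \\ * & I_{n-m}\end{pmatrix}$ (up to an irrelevant sign in the top-left block), and invoke Lemma~\ref{hdim} to identify $\text{rank}\,M(\ell)=\dim H\cdot\ell$. Your additional remarks about basis independence and the sign convention are accurate and harmless.
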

\begin{proof} Let $\ell \in A_\tau$ and $x$ the corresponding point in $ \R^{n-m}$ such that $\a(0,x) = (e,\ell)$.
We compute the Jacobian matrix $J_\phi$ of the coordinatization $\b \circ \phi \circ \a$ of $\phi$ at $(0,x)$:

\vskip 0.2cm

\begin{eqnarray*}
J_{\phi }\left( 0,x\right)  &=&\left[
\begin{array}{cccccc}
\frac{\partial \tilde\phi\left( 0,x \right)(Y_1) }{\partial t_{1}} & \cdots  & \frac{\partial \tilde\phi\left( 0,x \right)(Y_1) }{\partial t_{n}} &
\frac{\partial \tilde\phi\left( 0,x \right) }{\partial x_{1}} & \cdots  &
\frac{\partial \tilde\phi\left( 0,x \right)(Y_1) }{\partial x_{n-m}} \\
\vdots  & \ddots  & \vdots & \vdots & \ddots  & \vdots  \\
\frac{\partial \tilde\phi\left( 0,x \right)(Y_m) }{\partial t_{1}} & \cdots  & \frac{\partial \tilde\phi\left( 0,x \right)(Y_m) }{\partial t_{n}} &
\frac{\partial \tilde\phi\left( 0,x \right) }{\partial x_{1}} & \cdots  &
\frac{\partial \tilde\phi\left( 0,x \right)(Y_m) }{\partial x_{n-m}} \\
\frac{\partial \tilde\phi\left( 0,x \right)(X_1) }{\partial t_{1}} & \cdots  & \frac{\partial \tilde\phi\left( 0,x \right)(X_1) }{\partial t_{n}} &
\frac{\partial \tilde\phi\left( 0,x \right) }{\partial x_{1}} & \cdots  &
\frac{\partial \tilde\phi\left( 0,x \right)(X_1) }{\partial x_{n-m}} \\
\vdots  & \ddots  & \vdots & \vdots  & \ddots  & \vdots  \\
\frac{\partial \tilde\phi\left( 0,x \right)(X_{n-m}) }{\partial t_{1}} & \cdots  & \frac{\partial \tilde\phi\left( 0,x \right)(X_{n-m}) }{\partial t_{n}} &
\frac{\partial \tilde\phi\left( 0,x \right) }{\partial x_{1}} & \cdots  &
\frac{\partial \tilde\phi\left( 0,x \right)(X_{n-m}) }{\partial x_{n-m}}
\end{array}\right]
\end{eqnarray*}

\vskip 0.5cm
\noindent
Now for each $1 \le j \le m$, we have
$$
\frac{\partial \tilde\phi\left( 0,x \right)(Y_j) }{\partial t_{k}}  =
\left.\frac{d}{du}\right|_{u = 0} \ell \left( Y_j + u [Y_j,Z_k] + \frac{u^2}{2!} [[Y_j,Z_k],Z_k] + \cdots \right) = \ell[Y_j,Z_k]
$$
holds for each $1 \le k \le n$, while
$$
\frac{\partial \tilde\phi\left( 0,x\right)(Y_j) }{\partial x_{r}}  = 0,  \ \ 1 \le r \le n-m
$$
since $\ell \mapsto \tilde\phi(0,x)(Y_j)$ is constant. On the other hand,
$$
\frac{\partial \tilde\phi\left( 0,x \right)(X_s) }{\partial x_{r}}  =  \delta_{rs}.
$$
Hence the differential $d\phi(e,\ell)$ is given by the matrix
$$
J_{\phi }\left( 0,x\right) = \left[\begin{matrix} M(\ell) \ \ & \mathbf 0 \\ \ast & I \end{matrix} \right]
$$
where $\mathbf 0$ denotes the $m \times (n-m)$ zero matrix and $I$ denotes the $(n-m) \times (n-m)$ identity matrix.
Now by Lemma \ref{hdim}, we have rank of $M(\ell) = \dim H \cdot \ell$, and the result follows.\qedhere \end{proof}

We can now state the main result.

\begin{thm}\label{measurethm} Let $H$ be a closed connected subgroup of $G$ with Lie algebra $\h$ and let $\tau = \tau(f,\h)$. If $H$ acts freely on some point of $A_\tau$
then $\nu$ is absolutely continuous with respect to $\mu$. Otherwise, $\nu$ is singular with respect to $\mu$.
\end{thm}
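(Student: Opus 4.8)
\medskip\noindent\textit{Sketch of the intended proof.} The plan is to prove that the following four conditions are equivalent, the first being the hypothesis of the theorem, and that their common truth forces $\nu\ll\mu$ while their common failure forces $\nu\perp\mu$:\ (i)~$H$ acts freely on some $\ell\in A_\tau$;\ (ii)~$d_\tau=\dim H$;\ (iii)~$\phi$ is a submersion at some point of $G\times A_\tau$;\ (iv)~$G\cdot A_\tau$ is not Lebesgue-null in $\g^*$. Here (i)$\Leftrightarrow$(ii) is the Corollary to Lemma~\ref{hdim}. For the remaining implications, the key observation is that $\phi(gs,\ell)=g\cdot\phi(s,\ell)$; since $s\mapsto gs$ is a diffeomorphism of $G$ and $\ell'\mapsto g\cdot\ell'$ is a diffeomorphism of $\g^*$, this identity forces $\mathrm{rank}\,d\phi(s,\ell)=\mathrm{rank}\,d\phi(e,\ell)$, which equals $n-\dim H(\ell)$ by Lemma~\ref{rank}. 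Thus $\phi$ is a submersion at $(s,\ell)$ exactly when $\dim H(\ell)=0$. If $d_\tau=\dim H$, then $V=U(d_\tau)\cap A_\tau$ is nonempty and $\dim H(\ell)=0$ for every $\ell\in V$, so (ii)$\Rightarrow$(iii); conversely a point of submersion lies over some $\ell\in A_\tau$ with $\dim H(\ell)=0$, whence $\dim H\cdot\ell=\dim H$ and so $d_\tau=\dim H$, giving (iii)$\Rightarrow$(ii). Finally, a submersion has open image, so (iii)$\Rightarrow$(iv); and if $\phi$ were nowhere a submersion, every point of $G\times A_\tau$ would be critical, so by Sard's theorem $G\cdot A_\tau=\phi(G\times A_\tau)$ would be Lebesgue-null, giving (iv)$\Rightarrow$(iii).

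The measure-theoretic input I would draw on, from the explicit orbital Plancherel formula of \cite{C2}, is this: a Borel set $E\subseteq\g^*/G$ is $\mu$-null if and only if its $G$-saturation $\pi^{-1}(E)$ is Lebesgue-null in $\g^*$. Indeed, by \cite{C2} the Plancherel measure is carried by a $G$-invariant Zariski-open conull subset of $\g^*$, over which both $\mu$ and Lebesgue measure on $\g^*$ disintegrate, with everywhere-positive densities, into Lebesgue measure on a Borel cross-section for the coadjoint action. Recall also, from \cite{FUJ}, that $\nu=\pi_*\xi$, where $\xi$ is the Lebesgue measure class on $A_\tau$, extended by $0$ off $A_\tau$.

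Granting this, the \emph{singular case} is immediate: if (i)--(iv) fail then $G\cdot A_\tau$ is Lebesgue-null by (iv), so $\mu\bigl((G\cdot A_\tau)/G\bigr)=0$, whereas $\nu$ is carried by $(G\cdot A_\tau)/G$ because $\xi$ is carried by $A_\tau$; hence $\mu$ and $\nu$ are concentrated on complementary Borel sets and $\nu\perp\mu$. For the \emph{absolutely continuous case}, suppose (i)--(iv) hold, so $V=U(d_\tau)\cap A_\tau$ is $\xi$-conull in $A_\tau$ and $\phi$ restricts to a submersion $G\times V\to\g^*$. Given $E$ with $\mu(E)=0$, put $W=\pi^{-1}(E)$, a $G$-invariant Lebesgue-null subset of $\g^*$. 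Since $W$ is $G$-invariant, $\phi^{-1}(W)\cap(G\times V)=G\times(W\cap V)$; as a submersion pulls Lebesgue-null sets back to Lebesgue-null sets, this set is null in $G\times V$, and therefore $W\cap V$ is null in $V$ by Fubini. Hence $\nu(E)=\xi\bigl(\pi^{-1}(E)\cap A_\tau\bigr)=\xi(W\cap V)=0$, that is, $\nu\ll\mu$.

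I expect the main obstacle to be precisely this interface between differential geometry and measure theory: one has to know that, under the quotient map, the Plancherel measure lies in the Lebesgue measure class on $\g^*$ — which is what the explicit formula of \cite{C2} supplies via disintegration over an orbital cross-section — and one has to keep the Borel bookkeeping for $G$-saturations on the possibly non-Hausdorff space $\g^*/G$ under control, most cleanly by restricting to a conull $G$-invariant Zariski-open subset on which the orbit space is standard Borel and such a cross-section is available. By comparison, the geometric half — the rank computation through Lemmas~\ref{hdim} and~\ref{rank}, together with the appeal to Sard's theorem — should be routine.
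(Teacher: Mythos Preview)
Your argument is correct, and the singular case is essentially the paper's: both apply Sard's theorem to the action map $\phi$, though you make explicit the equivariance $\phi(gs,\ell)=g\cdot\phi(s,\ell)$ that transports the rank computation from $(e,\ell)$ to arbitrary $(s,\ell)$, a point the paper leaves implicit. The absolutely continuous case, however, is handled by a genuinely different device. The paper works on the \emph{target}: it restricts the real-analytic quotient map $\sigma:\Omega\to\Omega/G$ to $\Omega\cap V\subset A_\tau$, argues (via real-analyticity and the fact that the image contains an open set) that the singular locus of $\alpha=\sigma|_{\Omega\cap V}$ is $\xi$-null, and then pushes Lebesgue measure forward through a submersion to land in the Lebesgue class on $\Omega/G$. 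You instead work on the \emph{source}: having already established that $\phi$ is a submersion on all of $G\times V$, you pull Lebesgue-null $G$-invariant sets $W\subset\g^*$ back through $\phi$, observe that $\phi^{-1}(W)\cap(G\times V)=G\times(W\cap V)$ by $G$-invariance, and conclude by Fubini that $W\cap V$ is $\xi$-null. Your route is arguably cleaner, since it bypasses the auxiliary analysis of the singular set of $\alpha$ and uses only the submersion property of $\phi$ that Lemma~\ref{rank} already supplies; the paper's route, on the other hand, makes the openness of $G\cdot(\Omega\cap V)/G$ in $\Omega/G$ visible along the way. Both arguments rest on the same measure-theoretic input from \cite{C2}, namely that Plancherel-null subsets of $\Omega/G$ correspond exactly to Lebesgue-null $G$-saturated subsets of $\Omega$.
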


\begin{proof} Suppose that there is $\ell \in A_\tau$ such that $H(\ell) = \{e\}$. By Lemmas \ref{hdim} and  \ref{rank}, $V = \{ \ell \in A_\tau  : \text{rank }d\phi(e,\ell)= n\}$ is a non-empty Zariski open subset of $A_\tau$. For each $\ell \in V$ there exists a rectangular open neighbourhood $J_\ell\times V_\ell$ of $(e,\ell)$ such that the restriction of $\phi$ to $J_\ell\times V_\ell$ is a submersion, and hence that $W_{\ell} = \phi(J_\ell\times V_\ell)$ is open. Now
$$W=\bigcup_{\ell \in V}W_{\ell}
$$
is open and satisfies $V \subset W \subset G\cdot V$.  Hence $G\cdot V / G = G\cdot W / G$ is open in $\g^* /G$.

Next we invoke results concerning the stratification and parametrization of coadjoint orbits  \cite{C0, C2} : there is a $G$ invariant Zariski-open subset $\O$ of $\g^*$, such that $\O / G$ has the structure of a smooth manifold (with underlying quotient topology) and such that the quotient mapping $ \sigma : \O \rightarrow \O / G$ is real analytic.  Now since $\O$ is dense in $\g^*$, then $\O \cap W \ne \emptyset$. Since $W \subset G\cdot V$ and $\O$ is $G$-invariant, then  $\O \cap V$ is a non-empty Zariski-open subset of $A_\tau$, and the Lebesgue measure $\xi$ on $A_\tau$ is supported on $\O \cap V$. Since $G \cdot( \O \cap V) $ is included in the $G$-invariant set $ \O \cap U(d_\tau)$, then $G \cdot (\O\cap V)$ is disjoint from the set $ G\cdot(A_\tau \setminus (\O \cap V))$, and hence $\nu$ is supported on $G \cdot (\O\cap V) / G$. Put $\a = \sigma|_{\O \cap V}$; since $\sigma$ is real analytic, then so is $\a$. Moreover, $G\cdot (\O \cap V) /G$ is open in $\g^* /G$ and $\nu = \a_\ast\xi$. Since $\a$ is real analytic on $A_\tau$, its set of singular points in $A_\tau$ has $\xi$-measure zero, and $\a$ is a submersion on the set of regular points in $A_\tau$. Since the pushforward of Lebesgue measure by a submersion is absolutely continuous with respect to Lebesgue measure, then $\nu$ is absolutely continuous with respect to the Lebesgue measure class on $G\cdot ( \O\cap V) /G$. Since Plancherel measure $\mu$ on $\O / G$ belongs to the Lebesgue measure class on $\O / G$ \cite{C2} and $G\cdot ( \O\cap V) /G$ is an open subset of $\O / G$, then $\nu$ is absolutely continuous with respect to $ \mu$.

Now suppose that for all $\ell \in A_\tau$, $H(\ell)$ is non-trivial. Then for all points $\ell \in A_\tau$, the rank of $\phi$ at $(e,\ell)$ is less than $n$. It follows that the Lebesgue measure of $G\cdot V$ is zero, and hence $\mu(G\cdot V / G) = 0$. But since $V$ is a Zariski-open subset of $A_\tau$, then the measure $\nu$ is supported on $G\cdot V / G$, and hence $\nu$ is singular with respect to $\mu$.\qedhere \end{proof}

We now turn to the question of admissibility. Let $\pi$ be any representation of $G$ acting in $\mathcal{H}_{\pi}$. For $\eta \in \H$ define $W_\eta : \H_\pi \rightarrow C(G)$ by $W_\eta(f) = \langle f, \pi(\cdot) \eta\rangle$. The vector $\eta$ is said to be admissible (or a continuous wavelet) if $W_\eta$ is an isometry of $\mathcal{H}$ into $L^2(G)$. In this case, $W_\eta $ intertwines the representation $\pi$ with the left regular representation $L$ of $G$, so that $\H = W_\eta(\H_\pi)$ is a closed left invariant subspace of $L^2(G)$ and $\pi$ is equivalent with $L$ acting in $\H$. 

Let $\mathcal H$ be a closed left invariant subspace of $L^2(G)$, and let $P : L^2(G) \rightarrow \mathcal H$ be the orthogonal projection onto $\mathcal H$. Then there is a unique (up to $\mu$-a.e. equality) measurable field $\{\hat P_\l\}_{\l\in \hat G}$ of orthogonal projections where $\hat P_\l$ is defined on $\mathcal L_\l$, and so that 
 $$
 \widehat{(P\phi)}(\l) = \hat\phi(\l) \hat P_\l
 $$
 holds for $\mu$-a.e.  $\l \in \hat G$. Set $m_\mathcal H(\l) = \text{rank}(\hat P_\l)$. 
We recall  \cite[Theorem 4.22]{Fu05}.
 
 \begin{prop} \label{admissible} Let $\mathcal H$ be a closed left invariant subspace of $L^2(G)$. If $G$ is nonunimodular, then $\mathcal H$ has an admissible vector. If $G$ is unimodular, then $\H$ has an admissible vector if and only if $m_\mathcal H$ is integrable over $\hat G$ with respect to the Plancherel measure $\mu$.  
 
 
 
 

 \end{prop}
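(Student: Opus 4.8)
Since Proposition \ref{admissible} is quoted directly from \cite[Theorem 4.22]{Fu05}, the plan is to indicate the mechanism of its proof rather than to reprove it in detail. The starting point is the Plancherel decomposition $L^2(G)\cong\int_{\hat G}^{\oplus}\mathcal L_\lambda\,d\mu(\lambda)$ with each fibre $\mathcal L_\lambda\cong\mathcal H_\lambda\otimes\overline{\mathcal H_\lambda}$ the space of Hilbert--Schmidt operators on $\mathcal H_\lambda$, under which the Fourier transform $\phi\mapsto\hat\phi(\lambda)$ intertwines $L$ with left multiplication $T\mapsto\pi_\lambda(g)T$ and intertwines the right regular representation with right multiplication twisted by the Duflo--Moore operators $C_\lambda$; these $C_\lambda$ reduce to the identity precisely when $G$ is unimodular. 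Because $\mathcal H$ is left invariant, each projection $\hat P_\lambda$ commutes with left multiplication, hence $\hat P_\lambda=\mathrm{Id}_{\mathcal H_\lambda}\otimes Q_\lambda$ for the orthogonal projection $Q_\lambda$ onto a subspace $\mathcal K_\lambda\subseteq\overline{\mathcal H_\lambda}$, and $m_{\mathcal H}(\lambda)=\dim\mathcal K_\lambda$.

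The second step converts the admissibility of a vector $\eta\in\mathcal H$ into a fibrewise condition on its Fourier field. Writing $W_\eta\phi=\phi*\eta^{\ast}$ and applying Plancherel one gets $\widehat{W_\eta\phi}(\lambda)=\hat\phi(\lambda)\,\hat\eta(\lambda)^{\ast}$, up to the $C_\lambda$-twist; since $\hat\phi(\lambda)=\hat\phi(\lambda)Q_\lambda$ for $\phi\in\mathcal H$, the requirement that $\|W_\eta\phi\|_{L^2(G)}=\|\phi\|_{L^2(G)}$ for all such $\phi$ is equivalent to the pointwise relation that, after the appropriate normalization by $C_\lambda^{-1/2}$, the operator $\hat\eta(\lambda)$ is a partial isometry with initial space $\mathcal K_\lambda$ for $\mu$-a.e.\ $\lambda$, together with the membership condition $\int_{\hat G}\|\hat\eta(\lambda)\|_{\mathrm{HS}}^{2}\,d\mu(\lambda)<\infty$.

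When $G$ is unimodular the twist is trivial, so the fibrewise relation forces $\|\hat\eta(\lambda)\|_{\mathrm{HS}}^{2}=\dim\mathcal K_\lambda=m_{\mathcal H}(\lambda)$; consequently $\|\eta\|_{L^2(G)}^{2}=\int_{\hat G}m_{\mathcal H}(\lambda)\,d\mu(\lambda)$, so an admissible vector exists if and only if that integral is finite, the ``if'' direction being obtained by a measurable choice of partial isometries with prescribed initial spaces $\mathcal K_\lambda$. When $G$ is nonunimodular the operators $C_\lambda$ are unbounded, hence $C_\lambda^{-1/2}$ is bounded and there is enough slack to satisfy the partial-isometry relation while keeping $\|\hat\eta(\lambda)\|_{\mathrm{HS}}^{2}$ below an integrable function of $\lambda$: one supports $\hat\eta(\lambda)$ on a low-dimensional corner of $\mathcal K_\lambda$ on which $C_\lambda$ is large, so that the normalization shrinks the Hilbert--Schmidt norm. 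I expect the real obstacle to be exactly this construction --- verifying that the partial isometries, the square roots of the Duflo--Moore operators, and the Hilbert--Schmidt bound can all be chosen simultaneously measurably in $\lambda$, and that the resulting $\eta$ genuinely lies in $\mathcal H$ --- which is the direct-integral bookkeeping carried out in \cite[Chapter 4]{Fu05}.
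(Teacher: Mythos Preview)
The paper does not prove this proposition at all; it merely recalls \cite[Theorem 4.22]{Fu05} and states the result. You correctly identify this and go further by sketching the argument from F\"uhr's monograph, which is reasonable and largely accurate.

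Two small points in your sketch deserve correction. First, the claim that $C_\lambda^{-1/2}$ is bounded is false: the semi-invariance relation $\pi_\lambda(g)C_\lambda\pi_\lambda(g)^{-1}=\Delta(g)^{\pm 1}C_\lambda$ forces the spectrum of $C_\lambda$ to be invariant under multiplication by the (nontrivial) range of the modular function, so the spectrum accumulates at both $0$ and $\infty$, and $C_\lambda^{-1/2}$ is unbounded as well. What actually matters is only that the spectrum of $C_\lambda$ is unbounded \emph{above}. Second, the phrase ``supports $\hat\eta(\lambda)$ on a low-dimensional corner of $\mathcal K_\lambda$'' is misplaced: the initial space of the required partial isometry must be all of $\mathcal K_\lambda$, not a corner of it. The freedom lies in the choice of the \emph{final} space inside $\mathcal H_\lambda$, which one places in a spectral subspace of $C_\lambda$ corresponding to large eigenvalues; then $\hat\eta(\lambda)=C_\lambda^{-1/2}(\text{partial isometry})$ has controllably small Hilbert--Schmidt norm. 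With these adjustments your outline matches F\"uhr's argument.
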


In light of the preceding and Theorem \ref{measurethm}, the following is immediate. 

\begin{cor} \label{admissible} Suppose that $G$ is nonunimodular. Then $\tau$ has an admissible vector if and only if $H$ acts freely on some $\ell \in A_\tau$. 
\end{cor}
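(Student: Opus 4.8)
The plan is to deduce Corollary \ref{admissible} directly from Theorem \ref{measurethm} and Proposition \ref{admissible}, with essentially no additional computation. The key conceptual link is the observation made in Section \ref{prel}: since $G$ is type I and $m_L(\pi) = \infty$ for $\mu$-almost every $\pi$, the monomial representation $\tau$ is (equivalent to) a subrepresentation of the left regular representation $L$ if and only if its spectral measure class $\nu$ is absolutely continuous with respect to the Plancherel measure class $\mu$. Thus the first step is to record that, by Theorem \ref{measurethm}, the hypothesis ``$H$ acts freely on some $\ell \in A_\tau$'' is equivalent to $\nu \ll \mu$, which in turn is equivalent to $\tau$ being realizable as $L$ restricted to some closed left invariant subspace $\mathcal H \subset L^2(G)$.

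Second, I would invoke Proposition \ref{admissible} in the nonunimodular case: every closed left invariant subspace $\mathcal H$ of $L^2(G)$ has an admissible vector. Hence if $\tau \simeq L|_{\mathcal H}$, then the admissible vector of $\mathcal H$, pulled back through the intertwining unitary, is an admissible vector for $\tau$. This gives one direction.

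Third, for the converse, I would recall the general structural fact (stated at the start of the discussion following Theorem \ref{measurethm}): if $\eta$ is an admissible vector for $\tau$, then $W_\eta$ is an isometry intertwining $\tau$ with $L$, so $W_\eta(\H_\tau)$ is a closed left invariant subspace of $L^2(G)$ and $\tau$ is equivalent to $L$ acting there. In particular $\tau$ embeds in $L$, so $\nu \ll \mu$, and Theorem \ref{measurethm} then forces $H$ to act freely on some point of $A_\tau$. The main (and only) obstacle is purely bookkeeping: making sure the chain of equivalences --- freeness of the $H$-action $\iff \nu \ll \mu \iff \tau \hookrightarrow L \iff \tau$ has an admissible vector (using nonunimodularity for the last step) --- is stated cleanly, with the role of type I and of $m_L = \infty$ a.e. made explicit so that ``absolutely continuous spectral measure'' genuinely upgrades to ``subrepresentation of $L$.'' No new estimates or constructions are needed beyond what is already assembled in the paper.
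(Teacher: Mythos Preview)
Your proposal is correct and follows exactly the approach the paper takes: the corollary is stated as immediate from Theorem \ref{measurethm} together with Proposition \ref{admissible}, and your chain of equivalences (freeness $\iff \nu \ll \mu \iff \tau \hookrightarrow L \iff$ admissible vector, the last using nonunimodularity) is precisely the intended argument. There is nothing to add or correct.
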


Suppose that $G$ is unimodular. Though it is clear that the condition  that $H$ acts freely on points of $A_\tau$ is still necessary for admissibility, examples indicate that the multiplicity function is never integrable, and hence that $\tau$ never has admissible vectors in this case. Thus we make the following. 

\begin{conj} A monomial representation of a unimodular exponential solvable Lie group $G$ never has admissible vectors. 

\end{conj}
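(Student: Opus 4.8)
\medskip
\noindent\emph{A strategy for proving the conjecture.} The plan is to combine Theorem~\ref{measurethm} with Proposition~\ref{admissible} and then to analyze the multiplicity function. First, if $H$ does not act freely on any point of $A_\tau$, then by Theorem~\ref{measurethm} the spectral measure $\nu$ is singular with respect to $\mu$, so $\tau$ is not even a subrepresentation of $L$ and a fortiori has no admissible vector. Hence one may assume that $H$ acts freely on some $\ell\in A_\tau$, so that by Theorem~\ref{measurethm} $\nu\ll\mu$ and $\tau$ is unitarily equivalent to $L$ acting on a closed left-invariant subspace $\mathcal H\subseteq L^2(G)$. By Proposition~\ref{admissible}, since $G$ is unimodular, it then suffices to show that the multiplicity function $m_{\mathcal H}=m_\tau$ fails to be integrable with respect to $\mu$.

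The key point is that $m_\tau$ is $\geq 1$ on the support of $\nu$, and that this support is, up to a $\mu$-null set, the open set $G\cdot V/G$ appearing in the proof of Theorem~\ref{measurethm} (where $V=U(d_\tau)\cap A_\tau$): indeed $\nu$ is the pushforward of the Lebesgue measure $\xi$ on $A_\tau$, which is supported on $\mathcal O\cap V$, and $\nu=\alpha_\ast\xi$ as in that proof. Therefore
$$
\int_{\hat G} m_\tau(\pi)\,d\mu(\pi)\ \geq\ \mu\bigl(G\cdot V/G\bigr),
$$
and since $\mu$ is a $\sigma$-finite infinite measure this reduces the conjecture to the purely geometric assertion that $\mu(G\cdot V/G)=\infty$, i.e. that the $G$-saturated open set $G\cdot V$ (equivalently $G\cdot A_\tau$) carries infinite Plancherel mass.

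To establish this I would use the structure theory of \cite{C0,C2}: on the $G$-invariant Zariski-open set $\O$ the orbit space $\O/G$ is parametrized by a cross-section on which $\mu$ is a strictly positive real-analytic (in fact semialgebraic) density times Lebesgue measure, and $\mu(\O/G)=\mu(\hat G)=\infty$ because $G$ is noncompact. On the other hand, by Lemma~\ref{rank} the map $\phi$ is a submersion near $(e,\ell)$ for every $\ell$ on which $H$ acts freely, so $G\cdot V$ is a nonempty $G$-invariant open subset of the layer $U(d_\tau)$; and $U(d_\tau)=\{\ell\in\g^*:\operatorname{rank}M(\ell)=d_\tau\}$ is itself a nonempty Zariski-open — hence co-null — subset of $\g^*$ as soon as $H$ acts freely somewhere on $A_\tau$. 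The remaining step would be to show that $G\cdot V$ is co-null in $U(d_\tau)$, hence that $G\cdot V/G$ is co-null in $\O/G$ and therefore carries infinite $\mu$-mass; this should follow from the fact that a nonempty semialgebraic $G$-invariant open subset of $\g^*$ which meets $\O$ must contain a full-measure subset of the unique layer through a generic orbit.

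The main obstacle is exactly this last step. When $G$ is \emph{nilpotent} one can bring in the dilation action on $\g^*$: since $\h^\perp$ is a linear (hence dilation-invariant) subspace, $G\cdot\h^\perp$ is a full-dimensional $G$-invariant cone, $G\cdot A_\tau$ agrees with this cone ``at infinity,'' and a full-dimensional cone has infinite measure; transporting this through the parametrization of $\O/G$ yields $\mu(G\cdot V/G)=\infty$. For a general exponential solvable $G$ there is no such scaling symmetry, and one must instead control the behaviour of $G\cdot A_\tau$ near infinity directly from the stratified parametrization of coadjoint orbits and from the explicit Pfaffian-type form of the Plancherel density, ruling out the a priori possibility that the $G$-invariant open set $G\cdot V$ meets $\O$ in a set of finite Plancherel mass. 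Carrying out this analysis uniformly over all monomial representations is precisely what keeps the statement at the level of a conjecture; once it is done, Proposition~\ref{admissible} finishes the proof.
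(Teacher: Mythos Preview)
The statement is a \emph{conjecture} in the paper, not a theorem: the paper offers no proof, only the remark that ``a resolution of this conjecture would require a more precise understanding of the image of the set $G\cdot(\Omega\cap V)/G$ in $\mathfrak g^*/G$.'' So there is no proof to compare your attempt against, and you are right to present your text as a strategy rather than a proof.

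Your reduction is essentially the one the authors have in mind: use Theorem~\ref{measurethm} to dispose of the case where $H$ does not act freely, then invoke the unimodular half of Proposition~\ref{admissible} to reduce admissibility to non-integrability of $m_\tau$, and finally bound $\int m_\tau\,d\mu$ below by the $\mu$-mass of the spectrum. You also correctly isolate the genuine obstacle --- showing that the relevant $G$-invariant open set carries infinite Plancherel mass --- and are honest that you do not close this gap. That is exactly where the paper leaves the problem.

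Two cautions about the reduction itself. First, the inequality $\int m_\tau\,d\mu\ge \mu(G\cdot V/G)$ presupposes that $m_\tau\ge 1$ holds $\mu$-a.e.\ on $G\cdot V/G$; what is immediate is only that $m_\tau\ge 1$ on the \emph{support} of $\nu$, and it is not proved in the paper that the pushforward $\alpha_*\xi$ has $\mu$-a.e.\ positive density on all of $G\cdot V/G$. The safer lower bound is $\mu(\{\pi:d\nu/d\mu(\pi)>0\})$, which may be strictly smaller. Second, your assertion that $U(d_\tau)$ is Zariski-open and co-null in $\mathfrak g^*$ is fine in the case at hand (since $d_\tau=\dim H$ is the maximal possible rank of $M(\ell)$), but the step ``$G\cdot V$ is co-null in $U(d_\tau)$'' is precisely the unproved assertion; your semialgebraic heuristic does not supply it, because a nonempty $G$-invariant semialgebraic open set need not be co-null. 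The nilpotent dilation argument you sketch is a reasonable special case to try, but note that even there $A_\tau=f+\mathfrak h^\perp$ is only an \emph{affine} subspace, not a cone, so ``agrees with the cone $G\cdot\mathfrak h^\perp$ at infinity'' needs a precise formulation before it yields $\mu(G\cdot V/G)=\infty$.
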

A resolution of this conjecture would require a more precise understanding of the image of the set $G\cdot(\O \cap V) / G$ in $\g^* / G$.


\end{document}